\newtheorem{theorem}[equation]{Theorem}
\newtheorem{lemma}[equation]{Lemma}
\newtheorem{conjecture}[equation]{Conjecture}
\newtheorem{observation}[equation]{Observation}
\newcommand\E{\mathbb{E}}
\title{Expected number of faces in a random embedding of any graph is at most linear}
\author{Jesse Campion Loth\\[1mm]
Department of Mathematics\\
Simon Fraser University\\
Burnaby, BC, V5A 1S6, Canada
\and
Bojan Mohar\thanks{B.M.~was supported in part by the NSERC Discovery Grant R611450 (Canada) and by the Research Project N1-0218 of ARRS (Slovenia).}~\thanks{On leave from IMFM, Jadranska 19, 1000 Ljubljana}\\[1mm]
Department of Mathematics\\
Simon Fraser University\\
Burnaby, BC, V5A 1S6, Canada
}
\begin{document}
\maketitle

\begin{abstract}
A random 2-cell embedding of a given graph $G$ is obtained by choosing a random local rotation around every vertex.
We analyze the expected number of faces of such an embedding, which is equivalent to studying its average genus. In 1991, Stahl \cite{St91Short} proved that the expected number of faces in a random embedding of an arbitrary graph of order $n$ is at most $n\log(n)$. While there are many families of graphs whose expected number of faces is $\Theta(n)$, none are known where the expected number would be super-linear. This led the authors of \cite{loth2021random} to conjecture that there is a linear upper bound. In this note we confirm their conjecture by proving that for any $n$-vertex multigraph, the expected number of faces in a random 2-cell embedding is at most $2n\log(2\mu)$, where $\mu$ is the maximum edge-multiplicity. This bound is best possible up to a constant factor. 
\end{abstract}

\section{Introduction}

By an \emph{embedding} of a graph $G$ we mean a 2-cell embedding of $G$ in some orientable closed surface, and we consider two embeddings of $G$ as being the same (or \emph{equivalent}) if there is a homeomorphism between the corresponding surfaces that induces the identity isomorphism on $G$. Equivalent embeddings are considered the same, and when we speak about all embeddings of a graph, we mean all equivalence classes. It is well known that the equivalence classes of all embeddings are in bijective correspondence with \emph{rotation systems}, which are defined as the collection of local rotations at the vertices of the graph, where by a \emph{local rotation at $v$} we mean a cyclic ordering of the half-edges, (we call these darts), incident with $v$.  We refer to \cite{MT01} for more details.

It is a classical problem to study the minimum genus and maximum genus of a graph across all of its embeddings, see \cite{GT87,MT01,Wh73}. Considering the set of all 2-cell embeddings of a graph is also a viable topic. An outline of various applications of graph embeddings can be found in \cite{lando2004graphs}. In this work, we consider the problem
of the \emph{average genus} across all the different embeddings of a fixed graph.  By Euler's formula, this is equivalent to studying the average number of faces across all embeddings of a graph.  It will be more convenient to state our results in terms of the number of faces, as it better illustrates our bounds.  Formally, we consider the uniform distribution across all embeddings of a fixed graph using rotation systems, and study $\E[F]$ where $F$ is the random variable denoting the number of faces in a random embedding of the graph.

This field of study was termed \emph{random topological graph theory} by White \cite{white1994introduction}.  Stahl \cite{St91Short} gave an upper bound on $\E[F]$ by proving that $\E[F] \leq n\log n$ for any simple graph on $n$ vertices.  It was shown in \cite{loth2021random} that there are many examples of graphs with $\E[F] = \Theta(n)$: suppose a graph has maximum vertex degree $d$ and a set $\mathcal C$ of cycles, all of length at most $\ell$.  Then it is shown that 
$$\E[F] \geq \frac{2 \vert \mathcal{C} \vert}{(d-1)^\ell}.$$
In particular, if the graph has small vertex-degrees and a set of $\Theta(n)$ short cycles, then we have at least linearly many expected faces.  There are many examples of graphs of bounded degree and with linearly many short cycles. They all have $\E[F] = \Theta(n)$.

However, there are no known examples where $\E[F]$ was super-linear, and the following conjecture was proposed by Halasz, Masařík, Šámal, and the authors of this note. %\cite{loth2021random}.

\begin{conjecture}[\cite{loth2021random}]\label{conj:1}
For every simple graph of order $n$, the expected number of faces when selecting an orientable embedding of $G$ uniformly at random is $O(n)$.\label{simple graphs conj}
\end{conjecture}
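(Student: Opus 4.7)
My plan is to prove the stronger statement announced in the abstract, namely that for every multigraph on $n$ vertices with maximum edge-multiplicity $\mu$ one has $\E[F] \leq 2n\log(2\mu)$; specializing to $\mu=1$ gives the $O(n)$ bound for simple graphs asserted in the conjecture.

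I would proceed by induction on $n$. The base case is $n=1$: here $G$ is a bouquet $B_\mu$ of $\mu$ loops on one vertex. A uniform random rotation of $B_\mu$ is a uniform cyclic permutation $\rho$ of the $2\mu$ darts, and $F(B_\mu)$ is the number of cycles of the permutation $\rho\alpha$, where $\alpha$ is the fixed involution pairing the two darts of each loop. I would establish $\E[F(B_\mu)]\leq 2\log(2\mu)$ directly, either by an inductive face-tracing argument (adding one loop at a time and tracking how the expected face count changes) or by comparing the cycle count of $\rho\alpha$ with the cycle count of a uniformly random permutation on $2\mu$ elements, whose expected value is the harmonic number $H_{2\mu}=\Theta(\log\mu)$.

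For the inductive step, assuming the bound for all multigraphs on $n-1$ vertices, I would pick a vertex $v$ and try to prove the vertex-deletion inequality
\[
\E[F(G)] - \E[F(G-v)] \leq 2\log(2\mu),
\]
which combined with the inductive hypothesis immediately yields the desired $2n\log(2\mu)$ bound, and, restricted to simple $G$, settles Conjecture~\ref{conj:1}.

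The main obstacle is this vertex-deletion inequality. A naive edge-by-edge argument (adding back the $\deg(v)$ darts at $v$ one at a time, each changing $\E[F]$ by at most one) gives only the bound $\deg(v)$, which can be much larger than $2\log(2\mu)$. To overcome this I would analyze the rotation at $v$ \emph{holistically}: fixing a random rotation of $G-v$ and then sampling a uniform random rotation at $v$ together with random insertion positions at $v$'s neighbors, the conditional expected change in $F$ should admit a bouquet-like cycle-counting analysis (analogous to the base case), since the darts at $v$ get paired off by the fixed embedding of $G-v$ into an effective involution structure. The most delicate point is that the resulting bound must scale like $\log(2\mu)$ rather than $\log(\deg v)$ or $\log n$; this requires exploiting that the many parallel darts at $v$ going to the same neighbor share a rigid face pattern, so their collective contribution to the effective cycle count is only $O(\log(2\mu))$ rather than $O(\log\deg v)$.
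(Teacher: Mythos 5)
There is a genuine gap: your key vertex-deletion inequality $\E[F(G)] - \E[F(G-v)] \leq 2\log(2\mu)$ is false for an arbitrary choice of $v$, even for simple graphs. Take $G = K_{2,m}$ with hubs $a,b$ and let $v=a$. Since every vertex $u_i$ has degree $2$, embeddings of $K_{2,m}$ correspond to embeddings of the dipole with $m$ parallel edges, so $\E[F(G)] = H_{m-1} + \lceil m/2\rceil^{-1} \sim \log m$; on the other hand $G-a$ is a star (a tree), so $F(G-a)=1$ always. Thus the increment is $\Theta(\log m) = \Theta(\log \deg v)$ while $\mu = 1$. This also refutes the heuristic at the end of your sketch: the large increment here has nothing to do with parallel darts at $v$ sharing a ``rigid face pattern'' (there are none), but comes from the embedding of $G-v$ presenting the darts of $v$ with an effective one-vertex (dipole-like) structure. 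So conditioning on the embedding of $G-v$ and doing a bouquet-style analysis at $v$ can only give a bound of order $\log(\deg v)$, not $\log(2\mu)$, and summing $\log(\deg v)$ over vertices gives Stahl's $n\log n$, not $O(n)$. One could hope to rescue the induction by choosing $v$ cleverly (in $K_{2,m}$, deleting a degree-$2$ vertex works), but identifying and justifying such a choice is precisely the missing core of the argument, and nothing in the proposal supplies it. A smaller issue: the base case bound $\E[F(B_\mu)]\le 2\log(2\mu)$ fails at $\mu=1$ if $\log$ is the natural logarithm (the single loop has exactly $2$ faces), so the per-vertex constant needs care in any case.

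For comparison, the paper sidesteps exactly this difficulty by working at the granularity of edges rather than vertices: it builds the random rotation by pairing darts one edge at a time, and a pigeonhole lemma guarantees that at every step there is an unprocessed edge $ij$ whose processing can close at most $2\mu_{ij}$ partial faces, so the expected number of faces closed at that step is at most $2\mu_{ij}/(D_iD_j)$. Summing these contributions along the greedy order yields $\E[F]\le n+\sum_i H_{\mu_i}$. The freedom to choose \emph{which edge} to process next (rather than committing to a whole vertex) is what keeps the per-step cost controlled by the multiplicity rather than by the degree; your vertex-deletion scheme removes that freedom, which is where it breaks.
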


In fact, a more general conjecture from \cite{loth2021random} allowing for multiple edges of arbitrarily large multiplicity $\mu$ will be treated.

\begin{conjecture}[\cite{loth2021random}]\label{conj:2}
For every $n$-vertex multigraph $G$ with maximum edge-multiplicity $\mu \geq 2$, the expected number of faces when selecting an orientable embedding of $G$ uniformly at random is $O(n \log (\mu))$.\label{endingconj} 
\end{conjecture}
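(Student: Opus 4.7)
The plan is to prove the bound by induction on the number of vertices $n$.

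\textbf{Base case ($n=1$).} Here $G$ is a bouquet of $\mu$ loops. A uniformly random rotation system is just a uniformly random cyclic permutation $\pi$ of the $2\mu$ darts, and the number of faces equals the number of cycles of $\pi\iota$, where $\iota$ is the fixed involution pairing the two darts of each loop. By the Harer--Zagier formula (or any of several direct arguments counting cycles of a uniform long cycle composed with a fixed fixed-point-free involution), $\E[F]$ is of order $\log(2\mu)$, comfortably within the desired bound $2\log(2\mu)$.

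\textbf{Inductive step.} For $n\ge 2$, choose a non-cut vertex $v$ (which exists since $G$ is connected), so that $G':=G-v$ is connected, has $n-1$ vertices, and has maximum multiplicity at most $\mu$. A uniform rotation system on $G$ restricts (by erasing the darts of the edges incident with $v$ from each cyclic order) to a uniform rotation system on $G'$, so there is a natural coupling under which $\E[F(G)]=\E[F(G')]+\E[\Delta]$, where $\Delta$ is the signed change in the face count when $v$ is re-inserted together with its $d=\deg(v)$ incident darts. The inductive hypothesis handles $\E[F(G')]$, and it remains to show that $\E[\Delta]\le 2\log(2\mu)$. The natural route is to insert the $d$ darts sequentially: each insertion changes the face count by $\pm 1$ depending on whether the two targeted corners lie in the same face, and a Harer--Zagier-style local calculation (leveraging that the $d$ darts at $v$ itself form a uniform cyclic rotation) should show that their cumulative expected contribution is logarithmic in $\mu$ rather than linear in $d$.

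\textbf{Main obstacle.} The delicate step is precisely this Harer--Zagier-style analysis in a non-bouquet setting. Successive edge insertions are not independent: whether each new edge splits or merges faces depends on the entire face structure of $G'$, so the clean random long-cycle computation does not apply out of the box. The promising way forward is to run the Harer--Zagier recursion on the uniform cyclic rotation at $v$, where the $d$ darts are genuinely exchangeable, and to decompose $\Delta$ into face-orbits newly created at $v$ together with the way they merge with pre-existing orbits of $G'$; the multiplicity $\mu$ enters through the upper bound on how many darts can arrive at any single neighbor of $v$. A secondary subtlety is the choice of operation: edge contraction could reduce $n$ too, but it may increase the maximum multiplicity, whereas vertex deletion preserves the bound $\mu$, so vertex deletion is the right move.
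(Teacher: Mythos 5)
Your proposal has a genuine gap at its central step: the claim $\E[\Delta]\le 2\log(2\mu)$ for re-inserting an arbitrary non-cut vertex is not only left unproved (you flag it yourself as the ``main obstacle''), it is false as stated. Take $G$ to be two hubs $v,w$ joined by $d$ internally disjoint paths of length two (a subdivided dipole); this graph is simple, so $\mu=1$, and $v$ is a non-cut vertex. Since degree-two vertices carry no rotation information, random embeddings of $G$ correspond exactly to random embeddings of the dipole with $d$ edges, so $\E[F(G)]=H_{d-1}+\lceil d/2\rceil^{-1}\sim\log d$, while $G-v$ is a star, hence a tree, which has exactly one face in every embedding. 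Thus $\E[\Delta]\sim\log d$, which exceeds any bound of the form $c\log(2\mu)=c\log 2$ once $d$ is large. More generally, if you insert the $d$ edges at $v$ sequentially, the number of partial faces that a given insertion could close can be as large as the number of remaining unpaired darts, and the resulting estimate per insertion is of order $1/D_v$, summing to $O(\log d)$ per vertex --- which only recovers Stahl's $O(n\log n)$ bound, not the conjectured linear one. So a vertex-by-vertex induction cannot succeed unless the vertex, or the order in which edges are processed, is chosen adaptively according to the current partial face structure, and your sketch provides no such mechanism or estimate.

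This is precisely where the paper's argument differs: it processes the graph edge by edge (pairing unlabelled darts), and at each step uses a pigeonhole argument over \emph{all} remaining unprocessed edges to find one, say $ij$ with current multiplicity $\mu_{ij}$, whose processing can close at most $2\mu_{ij}$ partial faces; this global greedy choice gives $\E[X_e]\le 2\mu_{ij}/(D_iD_j)$, and harmonic-sum bookkeeping then yields $\E[F]\le n+\sum_i H_{\mu_i}\le n(H_\mu+1)$, proving the conjecture. Your base case (Harer--Zagier for the bouquet) is fine, and the coupling ``delete $v$; the induced rotations on $G-v$ are uniform'' is correct, but the freedom to choose \emph{which edge to process next}, rather than which vertex to delete, is the missing ingredient, and without it the inductive step fails on examples like the one above. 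Minor further points: $G$ need not be connected, so ``choose a non-cut vertex'' needs care, and a single dart-pairing can close two faces at once, not merely change the face count by $\pm 1$.
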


We first give some examples which show that this more general conjectured bound is tight.  We define a \emph{dipole} as the graph with $2$ vertices joined by $\mu$ edges.  Stahl first showed \cite{stahl1995average} that for the dipole on $\mu$ edges, $\E[F] \leq H_{\mu-1} + 1$ where 
$$H_\mu = 1 + \frac12 + \frac13 + \cdots + \frac1\mu$$ 
is the harmonic number.  It was later shown \cite{loth2021random} using Stanley's generating function \cite{stanley2011two} that $\E[F] = H_{\mu-1} + \left\lceil \frac{\mu}{2} \right\rceil^{-1}$.  Since $H_\mu \sim \log(\mu) + \gamma$, where $\gamma$ is the Euler-Mascheroni constant, this gives a tight example for $n=2$.  In fact Stanley's generating function may be used (see \cite{loth2021random}) to show that for any graph with one central vertex incident to all of the $\mu$ edges, $\E[F] \leq H_\mu + \frac{3}{\mu}$.

A tight example, up to a constant factor, where $n$ and $\mu$ may both tend to infinity is obtained by attaching a series of dipoles via cut edges as shown in Figure \ref{fig:dipolechain}. More precisely, consider $n/2$ dipoles, each with $\mu$ parallel edges, joined by cut edges.  Each separate dipole contains an average of at least $H_{\mu}$ faces, and joining these dipoles by cut-edges removes $n/2-1$ faces.  Therefore $\E[F] \geq \tfrac12 n (H_\mu - 1) $.

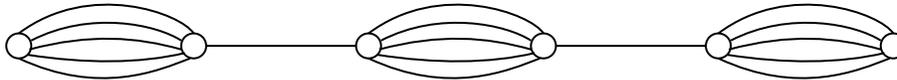
\begin{figure}
    \centering

\tikzset{every picture/.style={line width=0.75pt}} %set default line width to 0.75pt        
\begin{tikzpicture}[x=0.75pt,y=0.75pt,yscale=-0.9,xscale=0.9]
%uncomment if require: \path (0,83); %set diagram left start at 0, and has height of 83

%Shape: Ellipse [id:dp20530339236402317] 
\draw   (28,42) .. controls (28,38.13) and (31.13,35) .. (35,35) .. controls (38.87,35) and (42,38.13) .. (42,42) .. controls (42,45.87) and (38.87,49) .. (35,49) .. controls (31.13,49) and (28,45.87) .. (28,42) -- cycle ;
%Shape: Ellipse [id:dp8264336236022798] 
\draw   (126,42) .. controls (126,38.13) and (129.13,35) .. (133,35) .. controls (136.87,35) and (140,38.13) .. (140,42) .. controls (140,45.87) and (136.87,49) .. (133,49) .. controls (129.13,49) and (126,45.87) .. (126,42) -- cycle ;
%Shape: Ellipse [id:dp8957071112229332] 
\draw   (224,42) .. controls (224,38.13) and (227.13,35) .. (231,35) .. controls (234.87,35) and (238,38.13) .. (238,42) .. controls (238,45.87) and (234.87,49) .. (231,49) .. controls (227.13,49) and (224,45.87) .. (224,42) -- cycle ;
%Shape: Ellipse [id:dp21020321147568743] 
\draw   (322,42) .. controls (322,38.13) and (325.13,35) .. (329,35) .. controls (332.87,35) and (336,38.13) .. (336,42) .. controls (336,45.87) and (332.87,49) .. (329,49) .. controls (325.13,49) and (322,45.87) .. (322,42) -- cycle ;
%Shape: Ellipse [id:dp39283985320281] 
\draw   (420,42) .. controls (420,38.13) and (423.13,35) .. (427,35) .. controls (430.87,35) and (434,38.13) .. (434,42) .. controls (434,45.87) and (430.87,49) .. (427,49) .. controls (423.13,49) and (420,45.87) .. (420,42) -- cycle ;
%Shape: Ellipse [id:dp14294991899903087] 
\draw   (518,42) .. controls (518,38.13) and (521.13,35) .. (525,35) .. controls (528.87,35) and (532,38.13) .. (532,42) .. controls (532,45.87) and (528.87,49) .. (525,49) .. controls (521.13,49) and (518,45.87) .. (518,42) -- cycle ;
%Curve Lines [id:da6112176959823122] 
\draw    (35,35) .. controls (75,5) and (126,24) .. (133,35) ;
%Curve Lines [id:da06569984842099108] 
\draw    (42,39) .. controls (75,22) and (112,30) .. (126,39) ;
%Curve Lines [id:da8348183948781096] 
\draw    (41,44) .. controls (75,35) and (101,37) .. (126,44) ;
%Curve Lines [id:da8036692935563693] 
\draw    (40,47) .. controls (72,52) and (103,52) .. (128,46) ;
%Curve Lines [id:da8236906070275483] 
\draw    (35,49) .. controls (80,70) and (113,56) .. (131,49) ;
%Straight Lines [id:da41223529133832193] 
\draw    (140,42) -- (224,42) ;
%Straight Lines [id:da5095712695668752] 
\draw    (336,42) -- (420,42) ;
%Curve Lines [id:da9634444066322904] 
\draw    (231,35) .. controls (271,5) and (322,24) .. (329,35) ;
%Curve Lines [id:da7331072753146463] 
\draw    (238,39) .. controls (271,22) and (308,30) .. (322,39) ;
%Curve Lines [id:da28706291198417133] 
\draw    (237,44) .. controls (271,35) and (297,37) .. (322,44) ;
%Curve Lines [id:da10445535118592508] 
\draw    (236,47) .. controls (268,52) and (299,52) .. (324,46) ;
%Curve Lines [id:da5454378752151525] 
\draw    (231,49) .. controls (276,70) and (309,56) .. (327,49) ;
%Curve Lines [id:da47500946965459157] 
\draw    (427,35) .. controls (467,5) and (518,24) .. (525,35) ;
%Curve Lines [id:da2375145346564349] 
\draw    (434,39) .. controls (467,22) and (504,30) .. (518,39) ;
%Curve Lines [id:da05799721720163753] 
\draw    (433,44) .. controls (467,35) and (493,37) .. (518,44) ;
%Curve Lines [id:da6200752072151157] 
\draw    (432,47) .. controls (464,52) and (495,52) .. (520,46) ;
%Curve Lines [id:da8883546101495804] 
\draw    (427,49) .. controls (472,70) and (505,56) .. (523,49) ;

\end{tikzpicture}

    \caption{A chain of dipoles joined by cut edges gives a tight example for the main result of the paper.}
    \label{fig:dipolechain}
\end{figure}

Our main result confirms Conjectures \ref{conj:1} and \ref{conj:2}.  In fact we prove a more general bound in Theorem \ref{thm:main}, which allows for different edge multiplicities. The following result which implies both conjectures, is a simple corollary of it.

\begin{theorem}
    Let $G$ be a graph on $n$ vertices with maximum edge-multiplicity $\mu$, and let $F$ be the random variable for the number of faces in a random embedding of $G$.  Then we have:
    $$
    \E[F] \leq n \, (H_{2 \mu} + 1).
    $$
\end{theorem}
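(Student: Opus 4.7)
The plan is to derive this corollary from Theorem~\ref{thm:main}, the more general bound that the paper promises and that allows for differing edge multiplicities. Such a refined bound almost certainly takes the form $\E[F] \leq \sum_{v} \psi_v$, where $\psi_v$ is a local quantity at $v$ depending on the multiplicities of edges incident with $v$; the stated corollary then follows by uniformly bounding $\psi_v \leq H_\mu + 1$ and summing over the $n$ vertices of $G$.

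For Theorem~\ref{thm:main} itself, the natural strategy is induction on $n$, aiming to establish
$$\E[F(G)] \leq \E[F(G-v)] + H_{\mu_v} + 1,$$
where $\mu_v$ is the largest multiplicity of an edge at $v$. A convenient coupling for the comparison is to sample a uniform rotation system of $G$ and project to a rotation system of $G-v$ by deleting, at each neighbour of $v$, the darts of edges to $v$; the projection is again uniformly distributed. The comparison itself would be handled by the dart-insertion framework: given the rotation of $G-v$, one reinserts $v$ by placing its incident darts one at a time into uniformly random positions in $v$'s growing cyclic order, and each such insertion changes the current face count by $+1$, $0$, or $-1$ depending on whether it splits, preserves, or merges faces. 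Writing $\E[F]$ as a telescoping sum of these expected changes localises the problem to the single vertex $v$ and allows the induction to proceed.

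The main obstacle I anticipate is recovering the $H_\mu$ term rather than a trivial $d_v$ bound. A naive per-dart estimate gives a contribution of about $d_v$ at $v$, which sums to a far too weak $|E(G)|$ overall. To sharpen this to $H_\mu$ one must exploit that darts in a parallel class of $k$ edges at $v$ behave much like those of a dipole on $k$ edges: successive insertions within the same parallel class should contribute an expected face change of order $1/k$ rather than a constant, so that the total contribution of a multi-edge of multiplicity $m$ is of order $H_m \leq H_\mu$. This matches the dipole chain of Figure~\ref{fig:dipolechain}, which the paper identifies as tight. Making this dipole reduction rigorous -- essentially showing that a parallel-edge class at $v$ saturates almost independently from the rest of the graph, even though the faces being merged or split are global objects -- is where the delicate conditioning will be required, and the argument likely mirrors the dipole computation via Stanley's generating function cited in the introduction.
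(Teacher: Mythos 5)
Your first step---deriving the stated bound from Theorem~\ref{thm:main} via $\mu_i \le \mu$, so that $n+\sum_i H_{\mu_i} \le n(H_\mu+1)$---is exactly how the paper obtains this theorem. The gap is in your plan for proving Theorem~\ref{thm:main} itself. The inductive inequality you propose, $\E[F(G)] \le \E[F(G-v)] + H_{\mu_v} + 1$, is false in general, and it fails precisely at the point you flag as delicate: the expected face gain from reinserting $v$ is not controlled by the edge multiplicities at $v$. Take $G$ to be a dipole with $\mu$ parallel edges in which every edge is subdivided once, i.e.\ two hubs $v,w$ joined by $\mu$ internally disjoint paths of length two. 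This is a simple graph, so $\mu_v=1$ and $H_{\mu_v}+1=2$; but subdivision does not change face counts, so $\E[F(G)] = H_{\mu-1}+\lceil \mu/2\rceil^{-1} \sim \log\mu$, whereas $G-v$ is a star centred at $w$, a tree with exactly one face in every embedding. Hence reinserting the hub $v$, all of whose incident edges are simple, raises the expected number of faces by roughly $\log\mu$, not by $O(H_{\mu_v})$. The coupling is not the issue (projecting a uniform rotation system of $G$ to $G-v$ is indeed uniform); the issue is that ``effective multiplicity'' at a vertex is a global phenomenon---short internally disjoint paths behave like parallel edges---so the localisation to a single vertex and the dipole comparison within a parallel class cannot carry the induction. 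One could hope to choose a good vertex $v$ at each step (a degree-two vertex works in this example), but then the increment you need is no longer governed by $\mu_v$, and you are back to the original problem.

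The paper circumvents this by working edge-by-edge rather than vertex-by-vertex. It builds the random rotation by pairing the two darts of one edge at a time, and a pigeonhole lemma guarantees that at every step some unprocessed edge $ij$ has at most $2\mu_{ij}$ partial faces that its processing could close (after $k$ steps there are exactly $2(|E|-k)$ partial faces against $|E|-k$ unprocessed edges). Processing such an edge greedily gives $\E[X_e] \le 2\mu_{ij}/(D_iD_j)$, and summing along the decision tree, with the denominators at vertex $i$ running through $1,2,\dots,d_i$, splits into terms bounded by $\frac1a+\frac1b$ (contributing at most $\sum_i H_{\mu_i}$) and terms bounded by $\frac{\mu_i}{a^2}+\frac{\mu_j}{b^2}$ (contributing at most $n$). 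Any salvage of your inductive scheme would have to build in this kind of global greedy ordering; the purely local per-vertex estimate you rely on is not true.
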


In the case when $G$ is a simple graph, we are able to show the better bound of $\frac{\pi^2}{6} n$ in Theorem \ref{thm:simple}.  We are unaware of any examples of simple graphs which come close to the constant in this upper bound.  A chain of triangles connected by cut edges gives an example of a graph for which $\E[F] = \frac13 n + 1$. We conjecture that this is the optimal bound.

\begin{conjecture}
    For any simple graph $G$ of order $n$, $\E[F] \leq \frac13 n + 1$.
\end{conjecture}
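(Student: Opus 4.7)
The target bound $\tfrac{1}{3}n+1$ is much tighter than the $\tfrac{\pi^{2}}{6}n$ bound of Theorem~\ref{thm:simple}, and the extremal family (chains of triangles joined by bridges) points to an induction on $n$ in which bridges are peeled off first. I would therefore proceed by induction, combining a clean bridge reduction with a separate analysis of the $2$-edge-connected case.

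\emph{Bridge reduction.} Suppose $G$ is connected and contains a bridge $e = uv$, and let $G_1, G_2$ be the two components of $G-e$, with $u \in V(G_1)$ and $v \in V(G_2)$. A uniformly random embedding of $G$ can be sampled as independent uniformly random embeddings of $G_1$ and of $G_2$, together with uniform insertion positions for the new bridge dart into the local rotations at $u$ and at $v$. Independently of those insertion positions, the bridge is traversed twice on a single facial walk which \emph{merges} exactly one face of the $G_1$-embedding with exactly one face of the $G_2$-embedding; consequently
\[
   F(G) \;=\; F(G_1) + F(G_2) - 1
\]
pointwise, hence also in expectation. Combined with $|V(G_1)| + |V(G_2)| = n$, the inductive hypothesis yields $\E[F(G)] \le \tfrac{n_1}{3}+1 + \tfrac{n_2}{3}+1 - 1 = \tfrac{n}{3} + 1$. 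We may therefore assume $G$ is $2$-edge-connected.

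\emph{Bridgeless case.} For $2$-edge-connected $G$ one expects the strict inequality $\E[F(G)] < \tfrac{n}{3} + 1$ for all $n \ge 4$, since among $2$-edge-connected simple graphs only the triangle is tight. Promising strategies include: a refined version of the greedy process of Section~2 that prioritises triangle-closing edges and exploits that triangles in a $2$-edge-connected graph either meet vertices of degree $\ge 3$ (making them unlikely facial walks) or share edges (yielding negative correlations); an ear-decomposition-based reduction that tracks the change in $\E[F]$ as each ear is removed; or the equivalent expected-genus reformulation $\E[g] \ge \tfrac{1}{2}\bigl(|E| - \tfrac{4}{3}n + 1\bigr)$, attacked via Stanley's character-sum generating function \cite{stanley2011two}.

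\emph{Main obstacle.} The decisive difficulty is that, unlike bridges, $2$-edge-connectivity offers no additive decomposition identity for $\E[F]$, so one cannot simply reduce to strictly smaller subgraphs. Moreover, the naive linearity-of-expectation sum $\E[F] = \sum_{W} \Pr[W \text{ is facial}]$ fails to converge geometrically for dense graphs, because the number of closed walks of length $\ell$ can outgrow the decay $1/(\delta-1)^\ell$. Handling dense $2$-edge-connected simple graphs of minimum degree $\ge 3$ and girth $3$ (for example $K_n$), where neither a local reducible configuration nor a walk-counting bound will suffice, is where a genuinely new global argument will be required.
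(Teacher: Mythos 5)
This statement is an open conjecture in the paper: the authors prove only the weaker bound $\E[F] < \frac{\pi^2}{6} n$ (Theorem~\ref{thm:simple}) and exhibit the chain of triangles showing that $\frac13 n + 1$ would be tight, so there is no proof of the paper's to compare yours against. Judged on its own terms, your proposal is not a proof either, and you say as much. The bridge reduction is sound: for a cut edge $uv$ of a connected $G$ the facial walk through the bridge merges one face on each side, so $F(G)=F(G_1)+F(G_2)-1$ pointwise, the induced rotations on $G_1$, $G_2$ are independent and uniform, and the induction goes through (note you should state connectivity explicitly --- for disconnected graphs $F$ is additive over components with no $-1$, and already two disjoint triangles give $\E[F]=4>\frac13\cdot 6+1$, so the conjecture as you would prove it must be read for connected graphs, consistent with the extremal chain-of-triangles example). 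This correctly reduces the conjecture to $2$-edge-connected simple graphs.

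The genuine gap is that the bridgeless case, which is the entire content of the problem, is left as a list of hopes rather than an argument. None of the three suggested routes is developed to the point where one could check it: the ``refined greedy process'' would need a quantitative replacement for the bound $\E[X_e]\le 2/(D_iD_j)$ that is strong enough to sum to $\frac13 n+1$ rather than $\frac{\pi^2}{6}n$, and no such local estimate is proposed; the ear-decomposition route lacks any identity or inequality tracking how $\E[F]$ changes when an ear is added (unlike bridges, ears do change the face structure on both sides in a correlated way); and the reformulation $\E[g]\ge\frac12\bigl(|E|-\frac43 n+1\bigr)$ via character sums is just a restatement of the conjecture, with the hard analysis untouched. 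The side claim that among $2$-edge-connected simple graphs only the triangle attains the bound is also asserted without proof, and your own closing paragraph concedes that dense graphs such as $K_n$ defeat both local-configuration and walk-counting arguments. So the proposal establishes a useful reduction but does not prove the statement; the statement remains, as in the paper, a conjecture.
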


\section{Random embeddings}

Fix a graph $G$ on $n$ vertices with $V(G) = [n] := \{1,2,\dots,n\}$.  Let $\mu_{i,j}$ be the number of edges between vertices $i$ and $j$, where we could have $i=j$.  Let $\mu_i$ be the maximum of the multiplicities of edges incident with vertex $i$, where we count loops twice.  That is, let $\mu_i = \max (  2\mu_{i,i}, \max (\mu_{i,j} : j \neq i ))$.  Let $E_i$ be the set of darts incident with vertex $i$, let $N(i)$ be the set of vertices adjacent to $i$, and let $d_i = \vert E_i \vert$ be the degree of the vertex.  We start by outlining the random process leading to a random embedding of $G$ that we will use to prove the main result of this paper.

\bigskip

\noindent
{\bf Random Process A.}

(1) Start with the vertices $1,2,\dots,n$, with $d_i$ unlabelled darts coming out of vertex $i$ for each $i$ and fix a cyclic order of the darts around each vertex.  Over the course of the random process we will pair darts to form the edges of $G$, decreasing the number of unlabelled darts at each vertex.  When we pair a dart at vertex $i$ and a dart at vertex $j$, thus forming the edge $ij$, we label the two darts and say that we have \emph{processed} the edge $ij$.  We write $D_i$ for the number of unlabelled darts coming out of vertex $i$ at some step in the process, and write $\mu_{ij}$ for the number of unprocessed edges between $i$ and $j$ at some step.
    
(2) Repeat the following process: \\
Pick one of Option A or Option B to use at this step.\\
    Option A: Pick an edge between $i$ and $j$ which hasn't yet been chosen to process. 
    At vertices $i$ and $j$ choose one of the $D_i$ and one of the $D_j$ unlabelled darts (respectively) uniformly at random and then join them together to make an edge.\\
    Option B: Pick a dart at some vertex $i$.  Choose one of the unprocessed edges $ij$ incident with $i$ uniformly at random.  At vertex $j$, choose one of the $D_j$ unlabelled darts uniformly at random and join it with the chosen dart at $i$ to make an edge.
    
    In either option, we decrease each of $D_i, D_j, \mu_{ij}$ by one. Note that in the case of a loop ($i=j$), $D_i$ is decreased by 2.

(3) After all edges have been processed, the initial cyclic orders of darts around each vertex define a rotation system and hence an embedding of $G$.

\bigskip

By choosing the darts at step (2) of Random Process A in all possible ways, each embedding of $G$ is obtained the same number of times, and each outcome has the same probability. This shows that the process always gives an embedding of $G$ that is selected uniformly at random from the set of all embeddings. Let us also mention that the order in which the edges are processed, and whether we choose Option A or B, is not important.  These can be chosen deterministically or randomly at each step.

\begin{observation}
No matter whether we choose Option A or Option B at any step, and no matter which edge we choose when we use Option A or which dart we choose when we use Option B, at the end of the Random Process A, each embedding of $G$ is obtained with the same probability.
\end{observation}

At each step during Process A, we have a \emph{partial rotation} for which we can define \emph{(partial) faces}. The \emph{partial facial walk} around a partial face starts with an unlabelled dart, then it follows the already processed edges (maybe none) using the local rotation at vertices until we come to another unlabelled dart that is the end of this partial facial walk. See Figure \ref{fig:partial rotation}, where the partial facial walks starting at $c$ and $a$ (respectively) are outlined with thick lines.  Each dart is the beginning dart of a partial facial walk and is also the ending dart of some partial facial walk. We call a partial walk which starts and ends on opposite sides of the same unlabelled dart a \emph{bad partial facial walk}, and the corresponding face a \emph{bad partial face}.  This special type of partial face will be of special significance.  In addition to the partial facial walks, we have facial walks that use only already processed edges. These will be unchanged for the rest of the process and will be facial walks of the final embedding of $G$. We say that such a completed facial walk is \emph{closed} and is no longer considered to be a partial facial walk. Each closed facial walk became closed when we processed the last of its edges during the process. When we process the edge $ij$, there could be several pairs of a dart at $i$ and a dart at $j$ whose pairing will close a face.  If there are $k$ such pairs, we say that \emph{$k$ faces can be closed} while processing that edge.

\begin{figure}
    \centering
    \includegraphics{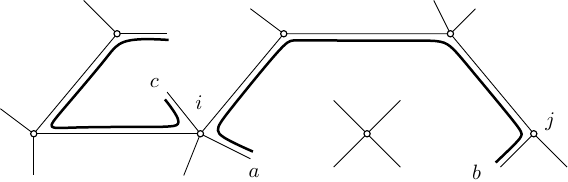}
    \caption{Partial rotation after processing five edges. Unlabelled darts are shown as short halfedges, whose local rotation is as given at the beginning, but it is not yet decided which of these will correspond to particular edges of $G$.
    Two of the partial facial walks are shown by a thick tracing line. If we are processing the edge $ij$ and choose darts $a$ and $b$ to be paired, a partial face will be closed. If we choose $c$ and $b$ instead, the two partial faces will be merged into a larger partial face, which will not be closed.}
    \label{fig:partial rotation}
\end{figure}

We now discuss a special order in which we will process the edges while generating random embeddings. 
A particular processing of edges in Process A will be termed as the \emph{greedy process}. This process is as follows:
\begin{enumerate}
    \item If the partial rotation has some bad partial faces, then pick a dart in some bad partial face to process and carry out Option B.  We prioritise picking a dart in a bad partial face which was also in a bad partial face at the previous step of Random Process A.
    \item If the partial rotation has no bad partial faces, then take Option A and choose the next edge $ij$ so that the number of faces that can be closed by processing this edge divided by $\mu_{ij}$ is minimum possible.
\end{enumerate}
An important property that we have when using the greedy process is that whenever an edge $ij$ is processed, there is a bound on the number of faces that can be closed by processing it.

\begin{lemma} \label{lem:closingfaces}
    During Random Process $A$, if the partial rotation at the start of the step has no bad partial faces then there is always an unprocessed edge $ij$, for which there are $2\mu_{ij}$ faces that can be closed by processing this edge.  Processing this edge either closes zero, one or two of the $2 \mu_{ij}$ possible closeable faces.
    
    Also, any partial rotation appearing at some step of the greedy version of Random Process $A$ has at most $2$ bad partial faces.  Each bad partial face appears in at most two consecutive steps of the greedy process.
\end{lemma}

\begin{proof}
    For the first claim, notice that the rotation system of the unlabelled darts and edges at each step is fixed. Each step of the process will join up two of the unlabelled darts into an edge.  After $\vert E \vert$ steps, we will end up with an embedding of $G$.  Recall that we defined a partial face as a face which has not yet been completed (closed) during the previous steps of the process.  The walk along a partial face starts with an unlabelled dart at some vertex. It will then alternate along edges and vertices until it eventually reaches an unlabelled dart (possibly the same one we started with), with which the partial walk ends.

At the start of the random process we have $2 \vert E \vert$ partial faces, where each partial face consists of two darts that are consecutive around the vertex in the local rotation.  At each step we join together two darts to make an edge. We claim that this always reduces the number of partial faces by two, and possibly creates one or two closed faces.  Indeed each of these darts we are joining to make an edge is the start and end of a partial face: write $f_1, f_2$ for the partial faces starting and ending respectively at one of the darts, and $f_3, f_4$ for the partial faces starting and ending respectively at the other dart as shown in Figure \ref{fig:partialfaces}. Note that $f_1$ and $f_3$ start with different darts, so they cannot be equal. Similarly, we have $f_2\ne f_4$.  There are a couple of cases:

\begin{itemize}
    \item $f_1,f_2,f_3,f_4$ are all distinct.  Then joining the two darts into an edge joins $f_1$ and $f_4$ into a partial face, and $f_2$ and $f_3$ into a partial face.
    \item $f_1 = f_4$ and $f_2 \neq f_3$, then we close the partial face $f_1=f_4$ into one completed closed face, and join $f_2$ and $f_3$ into a partial face.  The case where $f_1 \neq f_4$ and $f_2 = f_3$ is the same.
    \item $f_1 = f_4$ and $f_2 = f_3$, then we close both of these partial faces into two closed faces.
\end{itemize}
This covers all the cases. Notice that in all of the above cases we reduce the number of partial faces by two, proving the claim.
    
    This means that after $k$ edges have been processed, there are $\vert E \vert - k$ remaining unprocessed edges and $2 \vert E \vert - 2k$ partial faces.  Each partial face starts with a dart at some vertex $i$, and ends with a dart at some vertex $j$, where we may have $i=j$.  Each unprocessed edge is also associated to a pair of vertices $ij$, noting that $\sum \mu_{ij} = \vert E \vert - k$.  By the pigeonhole principle there is at least one pair $i,j$ (where we could have $i=j$) with $\mu_{ij} \geq 1$ unprocessed edges and at most $2 \mu_{ij}$ partial faces associated to it.  Hence we can always choose an edge such that processing it has at most $2 \mu_{ij}$ different faces that could be closed by processing it.

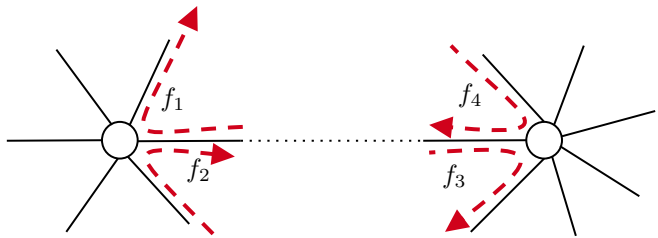
\begin{figure}
    \centering
\tikzset{every picture/.style={line width=0.75pt}} %set default line width to 0.75pt        
\begin{tikzpicture}[x=0.75pt,y=0.75pt,yscale=-1,xscale=1]
%uncomment if require: \path (0,153); %set diagram left start at 0, and has height of 153

%Shape: Ellipse [id:dp6331702193214457] 
\draw   (213,79.74) .. controls (213,74.62) and (217.03,70.48) .. (222,70.48) .. controls (226.97,70.48) and (231,74.62) .. (231,79.74) .. controls (231,84.85) and (226.97,89) .. (222,89) .. controls (217.03,89) and (213,84.85) .. (213,79.74) -- cycle ;
%Straight Lines [id:da3097402860130595] 
\draw    (231,80.24) -- (283,80) ;
%Shape: Ellipse [id:dp44215343614734937] 
\draw   (427,79.74) .. controls (427,74.62) and (431.03,70.48) .. (436,70.48) .. controls (440.97,70.48) and (445,74.62) .. (445,79.74) .. controls (445,84.85) and (440.97,89) .. (436,89) .. controls (431.03,89) and (427,84.85) .. (427,79.74) -- cycle ;
%Straight Lines [id:da8552593870861331] 
\draw    (375,80) -- (427,79.74) ;
%Straight Lines [id:da9115816019933463] 
\draw    (247,29) -- (227,72) ;
%Straight Lines [id:da6323593664984547] 
\draw    (226,88) -- (257,122) ;
%Straight Lines [id:da9806959028568907] 
\draw    (405,36.48) -- (436,70.48) ;
%Straight Lines [id:da6856543259139076] 
\draw    (436,89) -- (399,126) ;
%Curve Lines [id:da8032852775956167] 
\draw [color={rgb, 255:red, 208; green, 2; blue, 27 }  ,draw opacity=1 ][line width=1.5]  [dash pattern={on 5.63pt off 4.5pt}]  (284,73) .. controls (217.02,75.96) and (225.72,88.61) .. (259.44,15.4) ;
\draw [shift={(261,12)}, rotate = 114.44] [fill={rgb, 255:red, 208; green, 2; blue, 27 }  ,fill opacity=1 ][line width=0.08]  [draw opacity=0] (11.61,-5.58) -- (0,0) -- (11.61,5.58) -- cycle    ;
%Curve Lines [id:da33570160212356037] 
\draw [color={rgb, 255:red, 208; green, 2; blue, 27 }  ,draw opacity=1 ][line width=1.5]  [dash pattern={on 5.63pt off 4.5pt}]  (269,127) .. controls (225.88,82.9) and (219.26,81.06) .. (276.42,87.59) ;
\draw [shift={(280,88)}, rotate = 186.55] [fill={rgb, 255:red, 208; green, 2; blue, 27 }  ,fill opacity=1 ][line width=0.08]  [draw opacity=0] (11.61,-5.58) -- (0,0) -- (11.61,5.58) -- cycle    ;
%Curve Lines [id:da4507833375292506] 
\draw [color={rgb, 255:red, 208; green, 2; blue, 27 }  ,draw opacity=1 ][line width=1.5]  [dash pattern={on 5.63pt off 4.5pt}]  (382.61,72.51) .. controls (455.64,80.43) and (422.32,66.3) .. (389,32) ;
\draw [shift={(378,72)}, rotate = 6.5] [fill={rgb, 255:red, 208; green, 2; blue, 27 }  ,fill opacity=1 ][line width=0.08]  [draw opacity=0] (11.61,-5.58) -- (0,0) -- (11.61,5.58) -- cycle    ;
%Curve Lines [id:da5510940921269454] 
\draw [color={rgb, 255:red, 208; green, 2; blue, 27 }  ,draw opacity=1 ][line width=1.5]  [dash pattern={on 5.63pt off 4.5pt}]  (389.34,123.53) .. controls (442.58,83.92) and (431.9,82.08) .. (378,86) ;
\draw [shift={(386,126)}, rotate = 323.62] [fill={rgb, 255:red, 208; green, 2; blue, 27 }  ,fill opacity=1 ][line width=0.08]  [draw opacity=0] (11.61,-5.58) -- (0,0) -- (11.61,5.58) -- cycle    ;
%Straight Lines [id:da40340357749300815] 
\draw    (190,34) -- (218,72) ;
%Straight Lines [id:da06406481009268983] 
\draw    (195,126) -- (221,89) ;
%Straight Lines [id:da6658874696267082] 
\draw    (165,80) -- (213,79.74) ;
%Straight Lines [id:da004494483516352732] 
\draw    (440,88) -- (452,128) ;
%Straight Lines [id:da4774482039225115] 
\draw    (444,82.74) -- (484,108) ;
%Straight Lines [id:da30790181770146774] 
\draw    (445,77.74) -- (492,65) ;
%Straight Lines [id:da3092072527824159] 
\draw    (441,72) -- (456,32) ;
%Straight Lines [id:da015735985734961377] 
\draw  [dash pattern={on 0.84pt off 2.51pt}]  (283,80) -- (375,80) ;

% Text Node
\draw (241,51.5) node [anchor=north west][inner sep=0.75pt]   [align=left] {$\displaystyle f_{1}$};
% Text Node
\draw (254,87.5) node [anchor=north west][inner sep=0.75pt]   [align=left] {$\displaystyle f_{2}$};
% Text Node
\draw (391,49.5) node [anchor=north west][inner sep=0.75pt]   [align=left] {$\displaystyle f_{4}$};
% Text Node
\draw (383,90.5) node [anchor=north west][inner sep=0.75pt]   [align=left] {$\displaystyle f_{3}$};
\end{tikzpicture}
    \caption{The situation when we have chosen two darts, and are replacing them with an edge.  The partial facial walks $f_1,f_4$ will merge, and the partial facial walks $f_2,f_3$ will merge.  This may also add one or two closed faces.}
    \label{fig:partialfaces}
\end{figure}

    For the second claim, we use induction.  Initially we may assume $G$ has no vertices of degree one, as these will not affect the final number of faces in an embedding of $G$, so we have no bad partial faces.  Then, suppose that the partial rotation we have at the start of a step has at most two bad partial faces.
    
     Case 1: It has no bad partial faces.  Then since processing the edge affects at most two faces, we can add at most $2$ bad partial faces.
     
     Case 2: It has one or two bad partial faces.  Then the greedy version of Random Process A will pair a dart in a bad partial face, removing it.  At most one other partial face will be affected by adding this edge, so we can add at most one new bad partial face.

     In either case, the number of bad partial faces in the new partial rotation is also at most two.  Also, since there is at most one dart in a bad partial face which was not processed at this step, the greedy process must process this dart at the next step.  Therefore this unlabelled dart appears in a bad partial face in at most two consecutive steps of the random process.
\end{proof}

An analysis of the greedy version of Random Process A gives our main result.  Recall that $\mu_i$ was defined as the maximum multiplicity of edges incident with vertex~$i$, counting loops twice.

\begin{theorem} \label{thm:main}
    $\E[F] \leq n + \sum_{i=1}^n H_{\mu_i}$.
\end{theorem}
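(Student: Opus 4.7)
The plan is to apply the greedy version of Process~A. By the preceding lemma, at each step~$t$ the edge $e_t = i_t j_t$ chosen by the greedy rule satisfies $F_{i_t j_t}^{(t)} \le 2\mu_{i_t j_t}^{(t)}$, where $F_{ij}^{(t)}$ is the number of partial faces associated with $\{i,j\}$ and $\mu_{ij}^{(t)}$ the number of unprocessed $ij$-edges at step~$t$. A direct count over the $D_{i_t}^{(t)} D_{j_t}^{(t)}$ equally likely dart pairings shows that the conditional expected number of faces closed at step~$t$ equals $F_{i_t j_t}^{(t)} / (D_{i_t}^{(t)} D_{j_t}^{(t)})$: summing, over all pairs $(a,b)$ of a dart $a$ at~$i_t$ and a dart $b$ at~$j_t$, the number of faces closed by pairing $a$ and $b$ gives exactly $F_{i_t j_t}^{(t)}$, since each partial walk with endpoints in $\{i_t,j_t\}$ is closed by a unique such pair. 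Combining with the greedy bound and linearity of expectation,
\[
\E[F] \;\le\; \E\!\left[\sum_{t} \frac{2\mu_{i_t j_t}^{(t)}}{D_{i_t}^{(t)} D_{j_t}^{(t)}}\right].
\]

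The heart of the argument is then the deterministic inequality
\[
\sum_{t} \frac{2\mu_{i_t j_t}^{(t)}}{D_{i_t}^{(t)} D_{j_t}^{(t)}} \;\le\; n + \sum_{i=1}^n H_{\mu_i}
\]
along every run of the greedy process. The basic constraints available are $\mu_t \le \min(D_{i_t}, D_{j_t})$ (each unprocessed $i_tj_t$-edge contributes one dart at each endpoint) and $\mu_t \le \min(\mu_{i_t}, \mu_{j_t})$. From $\mu_t \le (D_i + D_j)/2$ one obtains the soft split $2\mu_t/(D_i D_j) \le 1/D_i + 1/D_j$, but summing this over all steps yields only $\sum_i H_{d_i}$, which is far too weak when $d_i \gg \mu_i$ (as for a vertex adjacent to many single edges).

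The main obstacle is sharpening the bound to $n + \sum_i H_{\mu_i}$. The approach I would develop is an asymmetric charging scheme: for each vertex~$i$, route $1/D_i$ into~$i$'s harmonic budget only during the at most $\mu_i$ steps where $D_i \in \{1,\ldots,\mu_i\}$, yielding at most $H_{\mu_i}$ in that budget. During the earlier steps at~$i$ (when $D_i > \mu_i$), the sharper inequality $\mu_t \le \mu_i < D_i$ gives $\mu_t/(D_iD_j) \le 1/D_j$, so the contribution can be shifted entirely to the other endpoint; and the single transition step at which $D_i$ first enters the regime $D_i \le \mu_i$ contributes a bounded constant, summing to~$+n$ across all vertices. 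The delicate part is verifying that every step is covered exactly once without over-charging either vertex, which appears to need the greedy bound $F_{ij} \le 2\mu_{ij}$ together with $\mu_t \le \min(\mu_i, \mu_j, D_i, D_j)$ in an essential way: without the greedy choice, the trivial $F_{ij} \le D_i + D_j$ would leak extra harmonic terms and destroy the $\mu_i$-dependence.
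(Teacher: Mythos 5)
Your probabilistic setup coincides with the paper's: run the greedy version of Process A, invoke the lemma to guarantee the chosen edge has at most $2\mu_{ij}$ closable partial faces, observe that the conditional expected number of faces closed at that step is (number of closable faces) divided by the number of equally likely dart pairings (the paper notes the loop case needs $D_i(D_i-1)$ rather than $D_iD_j$, a detail you omit), and reduce everything to bounding $\sum_t 2\mu_t/(D_{i_t}D_{j_t})$ along every path of the decision tree. Up to that point the proposal is sound and matches the paper.

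The gap is in what you yourself call the heart of the argument: the deterministic inequality is not proved, and the charging scheme you sketch fails as stated. First, in an early step at $i$ (when $D_i > \mu_i$) the contribution is $2\mu_t/(D_iD_j) \le 2/D_j$, so ``shifting it entirely to the other endpoint'' charges $2/D_j$, twice the $1/D_j$ per value that $j$'s harmonic budget allows; this alone already overshoots $H_{\mu_j}$. Second, and more seriously, when both endpoints are in the early regime ($D_i > \mu_i$ and $D_j > \mu_j$) neither budget can absorb the charge, and there can be on the order of $d_i$ such steps per vertex (in a simple graph every $\mu_i=1$, so essentially every step is early at both ends), whereas your additive $n$ is reserved for a single ``transition step'' per vertex and cannot cover them. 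The paper handles precisely these steps by a different device: for the terms with $\min(\mu_i,\mu_j) < \min(a,b)$ it uses $\frac{2\min(\mu_i,\mu_j)}{ab} \le \frac{\mu_i}{a^2} + \frac{\mu_j}{b^2}$ and then the convergent tail $\mu_i \sum_{a \ge \mu_i+1} a^{-2} < \mu_i \int_{\mu_i}^{\infty} x^{-2}\,dx = 1$, summed over vertices; that tail estimate, one unit per vertex, is the true source of the $+n$, while the harmonic sums $\sum_i H_{\mu_i}$ come only from the steps with $\min(a,b) \le \min(\mu_i,\mu_j)$ via $\frac{2\min(a,b)}{ab} \le \frac{1}{a}+\frac{1}{b}$ and the fact that each value of $D_i$ occurs at most once per vertex. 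Without some analogue of this convergent-series accounting for the many early steps, the inequality $\sum_t 2\mu_t/(D_{i_t}D_{j_t}) \le n + \sum_i H_{\mu_i}$ remains unestablished, so the proposal does not yet prove the theorem.
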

\begin{proof}
    At the start of each step in our random process, we have some partial rotation.  The random process then fixes one edge into the embedding to obtain a new partial rotation.  Since each step fixes one edge, there are $|E|$ total steps.  If the partial rotation $R_k$ appears at the start of step $k$ for $k=1,\dots, |E|$, then we say that the sequence $(R_1, R_2, \dots, R_{|E|})$ occurs at this run of Random Process A.  Note that no edges have been fixed in $R_1$, and all but one edge has been fixed in $R_{|E|}$.  However there will only be one place to put the final edge, so this determines a unique embedding.  We denote by $\mathbb{P}[R_1, R_2, \dots, R_{|E|}]$ the probability that we obtain this sequence.  Similarly let $\mathbb{P}[R_k = R]$ denote the probability that we obtain $R$ at the start of step $k$.  Let $X_k$ denote the random variable for the number of faces closed at step $k$ of the random process.
    
    Now suppose the partial rotation at the start of a step is $R$.  The random process will then add an edge to this partial rotation, which will possibly close some faces.  Let $X(R)$ denote the random variable for the number of faces closed at this step of the random process.  Recall that $X(R) \in \{0,1,2\}$, as shown in Lemma \ref{lem:closingfaces}.
    
      The total probability formula gives that:
    $$
        \E[X_k] = \sum_R \mathbb{P}[R_k = R] \, \E[X_k \mid R_k = R] = \sum_R \mathbb{P}[R_k = R] \, \E[X(R)]
    $$
    where the sum runs over all possible partial rotations.
    
    Using linearity of expectation yields:
    \begin{align*}
        \E[F] &= \sum_{k=1}^{|E|} \E[X_k] = \sum_{k=1}^{|E|} \sum_R \mathbb{P}[R_k = R] \, \E[X(R)] \\
        &= \sum_{k=1}^{|E|} \sum_R \sum_{\substack{(R_1, R_2, \dots, R_{|E|}) \\ R = R_k}} \mathbb{P}[R_1, R_2, \dots, R_{|E|}] \, \E[X(R)].
    \end{align*}
    Switching the order of summation, and then taking the maximum element in the sum, gives the following:
    \begin{align*}
        \E[F] &= \sum_{(R_1, R_2, \dots, R_{|E|})} \mathbb{P}[R_1, R_2, \dots, R_{|E|}] \, \sum_{k=1}^{|E|} \E[X(R_k)] \\
        &\leq \max_{(R_1, R_2, \dots, R_{|E|})}\left\{\sum_{k=1}^{|E|} \E[X(R_k)]\right\}.
    \end{align*}
    
    Therefore we may analyse each step of the random process separately, over any fixed possible sequence of partial rotations $(R_1, R_2, \dots, R_{|E|})$.  Suppose that we are at the start of step $k$ of the random process, and we have the partial rotation $R_k$. Further suppose that $R_k$ has no bad partial faces, and that we have chosen an edge $e = ij$ to process using Option A.  Let us first suppose that $i \neq j$.  Recall that $D_i$ and $D_j$ are the number of unlabelled darts at vertices $i$ and $j$ at this step, respectively.  So, there are $D_i D_j$ choices of places to place the edge across two darts at these vertices.  However in the greedy version of Process A we choose an edge to process with only $2 \mu_{ij}$ partial faces that could be closed.  Each partial face is closed by only one choice out of the $D_i D_j$ total placements of the edge, hence the probability that we close a face is $\frac{1}{D_i D_j}$.  By Lemma \ref{lem:closingfaces} at most two faces may be closed by the same choice of edge placement, but in any case we have $\E[X(R_{k})] \leq \frac{2 \mu_{ij}}{D_i D_j}$.   Note that $\mu_{ij} \leq \min(D_i, D_j)$.  Write $D_i = D_i(R_{k}), D_j = D_j(R_{k})$ for the values of $D_i,D_j$ at this step.  Then at the next step we have $D_i(R_{k+1}) = D_i(R_k) - 1, D_j(R_{k+1}) = D_j(R_k) - 1$.

    If $R_k$ has at least one bad partial face, then recall that the greedy version of Random Process $A$ will take a dart in a bad partial face, incident with some vertex $i$.  It will then pick an unprocessed edge $ij$ incident with $i$ uniformly at random, and a dart incident with $j$ uniformly at random to pair the dart at $i$ with.  Observe that a face is closed at this step if and only if the dart we're pairing with is also in a bad partial face.  Since there is at most one other bad partial face, there is at most one choice of dart to pair with which will close a face.  Suppose that this dart in the other bad partial face is incident with vertex $j'$, if such a dart exists.  Therefore the probability we make this choice, and hence close one face, is $\E[X(R_{k})] \leq \frac{\mu_{ij'}}{D_{i}D_{j'}}$.  At the next step we reduce $D_i$ and $D_{j}$ by one.  Note we don't reduce $D_{j'}$ by one, but at the next step if $j' \neq j$, we will process the dart in a bad partial face incident with vertex $j'$.  Therefore at the following step we will reduce $D_{j'}$ by one.
    
    The case when we are processing a loop from vertex $i$ to itself is similar.  If the partial rotation has no bad partial faces, then there are $\binom{D_i}{2}$ choices of placements for the edge, and there are at most $2 \mu_{ii}$ partial faces which may be closed.  Therefore by the same reasoning as in the previous case, we have $\E[X(R_k)] \leq \frac{2\mu_{ii}}{D_i (D_i - 1)/2} = \frac{4\mu_{ii}}{D_i (D_i - 1)}$.  At the next step we have $D_i(R_{k+1}) = D_i(R_k) - 2$.  A similar reasoning holds for the case when we process a dart incident with a bad partial face using Option B.

    Fix some vertex $i$.  Over the sequence of partial rotations $(R_1, R_2, \dots, R_{|E|})$, let $R_{k_1}, \dots, R_{k_c}$ be the partial rotations for which an edge at vertex $i$ is processed, where $c$ is equal to the number of edges incident with vertex $i$, counting each loop only once.  We have that $D_i(R_{k_1}) = d_i$.  If we are in option A of the random process, and the first edge processed at vertex $i$ was not a loop, then $D_i(R_{k_2}) = d_i - 1$.  If it was a loop then $D_i(R_{k_2}) = d_i-2$.  If we are in option B of the random process, then we could have $D_i(R_{k_2}) = d_i - 1$ or $D_i(R_{k_2}) = d_i$.  However if $D_i(R_{k_2}) = d_i$ then necessarily $D_i(R_{k_3}) = d_i-1$.  Therefore the values of $D_i$ decrease at each of these steps until $D_i(R_{k_c}) = 1$ or $2$, then these remaining darts are processing at this step.

    Initially we have that $D_i = d_i$ and $D_j = d_j$.  For $i \neq j$, $\mu_{ij} \leq \min(\mu_i,\mu_j)$, and $2\mu_{ii} \leq \mu_i$.  When we process a non-loop edge using Option A, $\E[X(R_k)]$ is bounded by some $\frac{2 \mu_{ij}}{D_i D_j}$, and $D_i, D_j$ and $\mu_{ij}$ all decrease by one.  When we process a loop edge using Option A, $\E[X(R_k)]$ is bounded by $\frac{4 \mu_{ii}}{D_i(D_i-1)}$, $D_i$ decreases by two and $\mu_{ii}$ decreases by one.  When we process a dart using Option B, $\E[X(R_k)]$ is bounded by $\frac{\mu_{ij'}}{D_iD_{j'}}$ for some $j'$.  For some $j$, $D_i, D_{j}$ decrease by one and $\mu_{ij}$ decreases by one.  Also when $i \neq j$, $\mu_{ij}$ is the number of unprocessed edges between vertices $i$ and $j$, so by definition we have that $\mu_{ij} \leq \min(D_i,D_j,\mu_i, \mu_j)$.  Similarly, we have that $2\mu_{ii} \leq \min(D_i,\mu_i)$.

    If at step $k$ we process an edge $e = ij$, then write $D_i(e) = D_i(R_k), D_j(e) = D_j(R_k)$.  For this sequence of partial rotations $(R_1, R_2, \dots, R_{|E|})$, write $E_A, E_B$ for the set of edges processed under Options A and B respectively.  Then we have:
    \begin{align*}
       \sum_{k=1}^{|E|} \E[X(R_k)] \leq& \sum_{\substack{(i,j) \in E_A \\ i \neq j}} \frac{2 \min(D_i(e),D_j(e),\mu_i, \mu_j)}{D_i(e) D_j(e)} + \sum_{\substack{(i,j) \in E_A \\ i = j}} \frac{2 \min(D_i(e), \mu_i)}{D_i(e) (D_i(e) - 1)} \\
       +& \sum_{(i,j) \in E_B} \frac{ \min(D_i(e),D_{j'}(e),\mu_i, \mu_j)}{D_i(e) D_{j'}(e)}.
    \end{align*}
    %Note that the values of $D_i$ and $D_j$ in all $|E|$ terms on the right-hand side run bijectively through the multiset $$\Delta = \{1,2, \dots, d_1, 1,2, \dots, d_2, \dots, 1,2,\dots, d_n\}.$$  
      We first note that for any $a,b > 0$:
    \begin{align*}
        \frac{2 \min(a,b,\mu_i, \mu_j)}{ab} &\leq \frac{\min(a,b,\mu_i, \mu_j)}{a^2} + \frac{\min(a,b,\mu_i, \mu_j)}{b^2} \\
        &\leq \frac{\min(a, \mu_i)}{a^2} + \frac{\min(b,\mu_j)}{b^2}.
    \end{align*}

\noindent This means we can rewrite the expectation as:
\begin{align*}
       \sum_{k=1}^{|E|} \E[X(R_k)] &\leq \sum_{\substack{(i,j) \in E_A \\ i \neq j}} \left( \frac{\min(D_i(e), \mu_i)}{D_i(e)^2} + \frac{\min(D_j(e), \mu_j)}{D_j(e)^2} \right)  \\
       &+ \sum_{\substack{(i,j) \in E_A \\ i = j}} \left( \frac{\min(D_i(e), \mu_i)}{D_i(e)^2} + \frac{\min(D_i(e), \mu_i)}{(D_i(e)-1)^2} \right) \\
       &+ \frac{1}{2} \sum_{(i,j) \in E_B} \left( \frac{\min(D_i(e), \mu_i)}{D_i(e)^2} + \frac{\min(D_{j'}(e), \mu_j)}{D_{j'}(e)^2} \right).
    \end{align*}

Now fix some vertex $i$.  For each non-loop edge $e \in E_A$ incident with $i$ we obtain a term of $\frac{\min(D_i(e), \mu_i)}{D_i(e)^2}$ in the preceding sum.    For each loop in $E_A$ incident with vertex $i$, we obtain a term of $\frac{\min(D_i(e), \mu_i)}{D_i(e)^2} + \frac{\min(D_i(e), \mu_i)}{(D_i(e)-1)^2}$.  Recall that when we process an edge incident with $i$, in Option A, we reduce $D_i$ by one if the edge is not a loop, and by two if the edge is a loop.  When we process an edge in $E_B$ incident with $i$, we obtain a term of $\frac{\min(D_i(e), \mu_i)}{2D_i(e)^2}$.  If we don't decrease $D_i$ at this step, then we do reduce $D_i$ at the following step and obtain another term of $\frac{\min(D_i(e), \mu_i)}{2D_i(e)^2}$.   This means that in the whole sum, $D_i$ appears (as some $D_i = D_i(e)$) for each of the values in $\{1,2,\dots,d_i\}$ in at most one term of the form $\frac{\min(D_i(e), \mu_i)}{D_i(e)^2}$.  Also recall that by the preceding arguments, it is enough to bound $\sum_{k=1}^{\vert E \vert} \E[X(R_k)]$ for an arbitrary $(R_1, R_2, \dots, R_{|E|})$ in order to bound $\E[F]$.  Therefore we may bound the expectation as:
\begin{align*}
    \E[F] \leq \sum_{i=1}^n \sum_{t=1}^{d_i} \frac{\min(t,\mu_i)}{t^2} < \sum_{i=1}^n \sum_{t\geq 1} \frac{\min(t,\mu_i)}{t^2}.
\end{align*}
For each vertex $i$ we obtain a sum of terms of the form:
$$
    \sum_{t \geq 1} \frac{\min(t, \mu_i)}{t^2} \leq \sum_{t=1}^{\mu_i} \frac{1}{t} + \sum_{t \geq \mu_i + 1} \frac{\mu_i}{t^2} = H_{\mu_i} + \mu_i \sum_{t \geq \mu_i + 1} \frac{1}{t^2}.
$$
    Note that we have:
    $$
      \mu_i \sum_{t \geq \mu_i + 1} \frac{1}{t^2} 
        < \mu_i \int_{\mu_i}^{\infty} x^{-2} dx = 1.
    $$
    Therefore the total contribution from all the vertices is bounded by:
    $$
        \E[F] < \sum_{i=1}^n \left( H_{\mu_i} + 1\right) = \sum_{i=1}^n H_{\mu_i} + n.
    $$
    This gives the required bound.
\end{proof}
    
If the graph is simple, then every $\mu_i$ is equal to 1. In this case we can obtain a slightly better upper bound.

\begin{theorem}\label{thm:simple}
  If $G$ is a simple graph of order $n$, then
  $$ \E[F] < \frac{\pi^2}{6}\, n. $$
\end{theorem}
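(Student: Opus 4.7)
The plan is to follow exactly the setup of Theorem \ref{thm:main}: run the greedy version of Random Process A and arrive at the same decision-tree bound
$$ \E[F] \leq \sum_{e=ij} \frac{2 \min(a,b,\mu_i,\mu_j)}{ab}, $$
where, as $e$ ranges over the edges, the values $a$ and $b$ on the right-hand side run bijectively through the multiset $\{1,2,\dots,d_1,\dots,1,2,\dots,d_n\}$. In the simple case every $\mu_i=1$, and since $a,b\geq 1$ the minimum in the numerator is just $1$. Thus for a simple graph the whole expression collapses to
$$ \E[F] \leq \sum_{e=ij} \frac{2}{ab}, $$
so the two-case analysis from the proof of Theorem~\ref{thm:main} is no longer needed.

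Next I would estimate each term symmetrically using AM-GM, namely $2ab \leq a^2 + b^2$, which rearranges to $\tfrac{2}{ab} \leq \tfrac{1}{a^2} + \tfrac{1}{b^2}$. Distributing the bound across the sum and regrouping by vertex, the terms of the form $\tfrac{1}{a^2}$ contributed by edges incident with vertex $i$ run, as $D_i$ decreases one at a time, over $a = 1, 2, \dots, d_i$ (there are no loops since $G$ is simple). Hence
$$ \E[F] \leq \sum_{i=1}^{n}\sum_{a=1}^{d_i} \frac{1}{a^2} < \sum_{i=1}^{n} \sum_{a=1}^{\infty} \frac{1}{a^2} = \frac{\pi^2}{6}\,n, $$
and the strict inequality is automatic because each $d_i$ is finite.

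There is no real obstacle here; the work is in recognizing that the two-case split used in Theorem~\ref{thm:main} is wasteful for simple graphs. The split replaced $\tfrac{2\min(a,b)}{ab}$ by $\tfrac{1}{a}+\tfrac{1}{b}$, which produces harmonic numbers $H_{d_i}$ per vertex. For the simple case the $\min$ is always $1$, so the symmetric bound $\tfrac{1}{a^2}+\tfrac{1}{b^2}$ is available and replaces harmonic sums by convergent Basel-type sums, which is exactly where the constant $\pi^2/6$ enters.
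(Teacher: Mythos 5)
Your proposal is correct and is essentially the paper's own proof: the paper likewise specializes the decision-tree bound to $\sum_{e=ij}\frac{2}{ab}$ (since $\mu_i=1$ forces the minimum to be $1$), applies $\frac{2}{ab}\leq\frac{1}{a^2}+\frac{1}{b^2}$, and counts that each value $a$ occurs at most $n$ times in the multiset $\Delta$, yielding $n\sum_{a\geq 1}a^{-2}=\frac{\pi^2}{6}n$. Your per-vertex regrouping $\sum_i\sum_{a=1}^{d_i}a^{-2}$ is the same count, and your remark that strictness follows from the finiteness of the $d_i$ is a harmless addition.
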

\begin{proof}
    From the proof of the general case, we have the following sum as an upper bound:
    $$
       \E[F] \leq \sum_{e = ij} \frac{2}{ab} \leq \sum_{e = ij}\left( \frac{1}{a^2} + \frac{1}{b^2} \right) ,
    $$
    where the pairs $\{a,b\}$ of the summands exhaust the multiset 
    $$\Delta = \{1,2, \dots, d_1, 1,2, \dots, d_2, \dots, 1,2,\dots, d_n\}.$$ 
    Each term of $1/a^2$ appears at most $n$ times for each $a \geq 1$, so we obtain the upper bound:
    $$
        \E[F] < n \sum_{a \geq 1} \frac{1}{a^2} = \frac{\pi^2}{6} n .
    $$
\end{proof}

\section*{Acknowledgements}

The authors would like to thank Kevin Halasz, Tomáš Masařík and Robert Šámal for helpful discussions on the topic.

\bibliographystyle{plain}
\bibliography{expectedfaceslinear}

\end{document}